\newtheorem{theorem}{Theorem}[section]
\theoremstyle{instructions}
\newtheorem{instructions}[theorem]{Instructions}
\theoremstyle{definition}
\newtheorem{example}[theorem]{Example}
\theoremstyle{remark}
\theoremstyle{problem}
\newtheorem{problem}[theorem]{Problem}
\theoremstyle{properties}
\newtheorem{properties}[theorem]{Properties}
\begin{document}

\title{Features of a high school olympiad problem}


\author{Lawrence Smolinsky}
\address{Department of Mathematics\\
Louisiana State University\\
Baton Rouge, LA 70803, USA}
\email{smolinsk@math.lsu.edu}

\date{\today}

\begin{abstract}  This paper is a supplement to a talk for mathematics teachers given at the 2016 LSU Mathematics Contest for High School Students.  The paper covers more details and aspects than could be covered in the talk.

We start with an interesting problem from the 2009 Iberoamerican Math Olympiad concerning a particular sequence.  We include a solution to the problem, but also relate it to several areas of mathematics.  This problem demonstrates the countability of the rational numbers with a direct one-to-one correspondence.  The problem also shows the one-to-one correspondence of finite continued fractions and rational numbers.  The subsequence of odd indexed terms was constructed by Johannes Kepler and is discussed.  We also show that the indices have an extension to the 2-adic integers giving a one-to-one correspondence between the positive real numbers and the 2-adic integers.  Everything but the extension to the 2-adic integers is known to have appeared elsewhere.
\end{abstract}

\maketitle
\pagestyle{myheadings}
\markboth{L. Smolinsky}{Features of a high school olympiad problem}

\section{The Problem}

Let us start with problem five from the 2009 Iberoamerican Math Olympiad.

\begin{problem}
\label{thm:problem}
The sequence $\{ a_n\}_{n=1}^\infty$  satisfies $a_1 = 1$ and for $n\ge 1$,
\[a_{2n} = a_n + 1; \quad a_{2n+1} = \frac 1{a_{2n}}.\]
Prove that every positive rational number occurs in the sequence exactly once.
\end{problem}

Let's write out the beginning of the sequence:
\[1, 2, \frac 12, 3, \frac 13, \frac 32, \frac 23, 4, \frac 14, \frac 43, \frac 34, \frac 52, \frac 25, \frac 53, \frac 35, 5,  \frac 15, \frac 54, \frac 45, \frac 73,  \frac 37,  \frac 74, \frac 47, \frac 72, \frac 27, \frac 75, \frac 57, \frac 83,  \frac 38, \cdots\]

\section{Countable and uncountable.}  For a finite set we can determine how large it is by counting the number of elements.  One can view counting the elements in a finite set as putting them in one-to-one correspondence with a finite counting set.  For example,  $\{ \clubsuit, \diamondsuit, \spadesuit, \heartsuit \} \leftrightarrow \{ 1,2,3,4 \}$, and we can say the two sets are size or cardinality 4.  We can also compare infinite sets using one-to-one correspondences.  The measure of the size of a set in this sense is called cardinality, and two sets have the same cardinality if they can be put into  one-to-one correspondence.

Let us check a few infinite sets that are of different sizes by measures other than cardinality (e.g., different length), and look to be very different at first glance.  Start with two intervals in $\mathbb R$ where one is 10 times longer.  Are the intervals $[0,1]$ and $[0,10]$ the same cardinality?  Yes, these are the same cardinality, and it is demonstrated by the one-to-one correspondence $x \leftrightarrow 10x$.

Next consider two intervals:  One of finite length and one of infinite length.  Are these intervals of different cardinality?  Let us take $[0,1)$ and $[0,\infty)$.  These sets are in fact the same cardinality.  A one-to-one correspondence to demonstrate this equality is  $x \leftrightarrow \tan\frac{\pi x}{2}$.

Now compare an interval and the natural numbers.   Do $\mathbb N$ and the interval $[0,1]$ have the same cardinality?  These cannot be put into a one-to-one correspondence.  To show that there does not exist any  one-to-one correspondence, we show that in any one-to-one correspondence between $\mathbb N$ and a subset of $S\subset [0,1]$, the subset is not all of $[0,1]$.  The argument is famous and is  known as \textit {Cantor's diagonalization argument}.

Suppose you have any correspondence between $\mathbb N$ and a subset of $[0,1]$.  We can show that not every number in $[0,1]$ is in the correspondence and therefore it is not a one-to-one correspondence between $\mathbb N$ and  $[0,1]$.

First note that every number in $[0,1]$ can be written as a decimal $0.d_1d_2d_3\cdots$,  e.g., $1= 0.999\cdots=0.\overline 9$, $\frac 12 = 0.5\overline 0$ or $0.4\overline 9$, $\frac 13 = 0.\overline 3$.  A correspondence between $\mathbb N$ and a subset of $[0,1]$ has the form

\begin{align*}
1 &\leftrightarrow 0.a_{11}a_{12}a_{13}a_{14}a_{15}\cdots a_{1\, n-1}a_{1\,n}a_{1\,n+1} \cdots\\
2 &\leftrightarrow 0.a_{21}a_{22}a_{23}a_{24}a_{25}\cdots a_{2\,n-1}a_{2\,n}a_{2\,n+1} \cdots\\
3 &\leftrightarrow 0.a_{31}a_{32}a_{33}a_{34}a_{35}\cdots a_{3\,n-1}a_{3\,n}a_{3\,n+1} \cdots\\
\vdots &\qquad \qquad\qquad\qquad\qquad \vdots\\
n &\leftrightarrow 0.a_{n1}a_{n2}a_{n3}a_{n4}a_{n5}\cdots a_{n\,n-1}a_{n\,n}a_{n\,n+1} \cdots\\
\vdots &\qquad \qquad\qquad\qquad\qquad \vdots
\end{align*}
We can now produce a number in $[0,1]$ that is not in the correspondence.  For $k\in\mathbb N$, let
\[d_k=\left\{
\aligned
3  &\quad\text {if $a_{k\, k} = 4$}\\
4   &\quad\text {if $a_{k\, k} \ne 4$}\\
\endaligned\right.\]
The number  $r=0.d_1d_2d_3d_4\cdots$ is not in the correspondence.  Note that $r$ has only one representation as an infinite decimal since the digits do involve 9's or 0's.  It is not the first number in the correspondence since $r$ and the first number have different first digits, i.e., $d_1 \ne a_{11}$.  Similarly $r$ is not the second number since $d_2 \ne a_{22}$.  This pattern holds in general.  The digits along the diagonal of the list distinguish the numbers in the list from $r$.  In other words  $r$ is not the $n^{th}$ number for any $n$ since $d_n \ne a_{nn}$.

The above argument shows that any list on the right hand side is not all $[0,1]$ and the set of numbers $[0,1]$ is not the same size as $\mathbb N$.  The cardinality of $\mathbb N$ is referred to as countably infinite and an infinite set (like $[0,1]$) that cannot be put into one-to-one correspondence with $\mathbb N$ is called uncountable.

What about the rational numbers?  

\section{Are the rational numbers countable or uncountable?}

One property the rational numbers possess is that between any two real numbers there are infinitely many rational numbers!  It is called being dense in the real line and the real numbers also possess this property.  When faced with the question of whether the rational numbers are countable like $\mathbb N$ or uncountable like $\mathbb R$, many people intuit that the rational numbers are uncountable because they are dense like the real numbers.  This is false.

The rational numbers are countable.  In fact, if a student successfully answers Problem~(\ref {thm:problem}), then they will have proved that there is a one-to-one correspondence between the positive rational numbers and $\mathbb N$.  The correspondence is simply:
\[ n \leftrightarrow a_n.\]
It is now easy to construct a one-to-one correspondence between the rational numbers and $\mathbb N$.  Let $s_1=0$, $s_{2n}=a_n$,  $s_{2n+1}=-a_n$ and the correspondence is:
\[ n \leftrightarrow s_n.\]

\section{Properties of the sequence}

Some properties of the sequence that will be required in subsequent sections are discussed.  First recall the recursion relations from Problem (\ref {thm:problem}):
\[ a_1 = 1, a_{2n} = a_n + 1, \text{ and } a_{2n+1} = \frac 1{a_{2n}}.\]
From the second equality we note that
$a_{2^m k}= a_k + m$  and in particular $a_{2^m}= m+1$.

Next we observe that the sequence in Problem~(\ref {thm:problem}) has the following properties:
\begin{properties}
\label{thm:properties}
\begin{align}
 a_1=1\\
a_{2k} = a_k +1 >1\\
a_{2k+1} = \frac 1{a_{2k}} <1 \text{ for } k>0
\end{align}
\end{properties}
The even indexed sequence elements are the positive rational numbers greater than 1 and the odd indexed rational numbers are the positive rational numbers less than or equal to 1.  In particular 
\[n \leftrightarrow a_{2n+1}\]
gives a one-to-one correspondence between the rational numbers in $(0,1)$ and $\mathbb N$, or in other words, the rational numbers in $(0,1)$ as the subsequence of odd indices starting at 3: $a_3, a_5, a_7, \cdots$.  

\section{A sequence of Johannes Kepler}
\label{sec:kepler}

The subsequence $a_3, a_5, a_7, \cdots$, which enumerates the rational numbers strictly between 0 and 1, was essentially given by Johannes Kepler in \textit{Harmonices Mundi, Book III}.  A translation is by Aiton, Duncan and Field (see \cite{kepler}) and is excerpted at the website \cite{PlanePath}.  While Kepler did not describe a sequence, he described the elements in an ordered manner by giving a tree structure.  His tree structure relates to the subsequence we defined as follows:

\Tree[.a_3 [.a_5 [.a_9 [.a_{17} ] [.a_{19} ]]
                  [.a_{11}  [.a_{21} ] [.a_{23} ] ]]
               [.a_7 [.a_{13}  [.a_{25} ] [.a_{27}  ] ]
                  [.a_{15} [.a_{29}  ] [.a_{31}  ]
                          ]]]

\vspace {3 mm}

Kepler's rules to propagate his tree are

\vspace {3 mm}

\begin{enumerate}
\item[ ]
\hskip -0.3in
\Tree[.{$\frac xy$} [.{$\frac x{x+y}$} ]
               [.{$\frac y{x+y}$}  ]]
{\quad or in terms of the sequence the propagation is}
\Tree[.a_{2n+1} [.a_{2(2n) + 1}  ]
               [.a_{2(2n+1) + 1} ]]
\end{enumerate}
\vspace {2 mm}
It is a short exercise with the two recursion relations in Problem (\ref{thm:problem}) to show that if $\displaystyle a_{2n+1} = \frac xy$, then  $\displaystyle a_{2(2n) + 1} = \frac 1{\frac 1{a_{2n+1}}+1}= \frac x{x+y}$ and  $\displaystyle a_{2(2n+1) + 1} =\frac 1{a_{2n+1}+1}= \frac y{x+y}$.

Another interesting feature of Kepler's tree is that the subsequence obtained by moving down the right-hand edge of the tree are all ratios of successive Fibonacci numbers.  

\section{A solution to Problem (\ref {thm:problem}).}

The solution presented is very similar to Alexander Remorov's solution \cite{Remorov}.
We first show that the sequence includes all positive rational numbers.
Use induction on $k$ with the induction hypothesis:  if $\displaystyle\frac pq$ with $p,q\in\mathbb N$ has $p+q\le k$, then
$\displaystyle\frac pq$ occurs in the sequence.

The induction hypothesis holds for $k=2$ since $a_1=\frac 11 = 1$.
Now suppose $\displaystyle\frac rs$ is a rational number with $r+s=k+1$, and the induction hypothesis holds for $k$.  Note that $r\ne s$ by Properties (\ref{thm:properties}) 2 and 3.  Consider the case $r>s$.  By the induction hypothesis, $\displaystyle\frac {r-s}s =a_m$ for some $m$. Then
$\displaystyle\frac rs = 1+\frac {r-s}s =a_{2m}$.
Next, consider the case $s>r$.  By the first case, there is an $m$ with $a_m= r/s$.  Therefore,  by the recursion relation in Problem (\ref {thm:problem}), $a_{m+1}= s/r$.


We next show that there are no repetitions in the sequence so that the sequence has distinct terms.  Suppose the smallest index representing a number that occurs more than once is $m$ and $a_m = a_n$.  Note that $m>1$ by Properties (\ref{thm:properties}) 2 and 3.  Also note that by Properties (\ref{thm:properties}) 2 and 3, $m$ and $n$ are the same parity.  If $m$ and $n$ are even, then $a_{m/2} = a_{n/2}$, by the recursion in Problem (\ref {thm:problem}).  This contradicts $m$ being minimum.  Similarly, if $m$ and $n$ are odd, then $a_{m-1} = a_{n-1}$.  Again, it follows by the recursion in Problem (\ref {thm:problem}) that contradicts $m$ being minimum.

\section{Continued fraction representation of elements and base 2 representation of indices}

We can read the sequence entry for $a_n$ from the representation of $n$, the index of the sequence entry.  (A similar treatment is given in \cite {Czyz}).  First write $n$ in base two as
\[ n= 2^{m_0} + 2^{m_1}+2^{m_2} + \cdots+ 2^{m_k} \]
with $m_0$ a nonnegative integer, the other $m_i$'s positive integers, and  $m_i < m_{i+1}$.  Next define $n_0 = m_0$ and $n_i = m_i - m_{i-1}$ for $i=1,\cdots, k$.   We may then write
\begin{equation}
\label{eq:base2}
n= 2^{n_0} + 2^{n_0+n_1}+2^{n_0+n_1+n_2} + 2^{n_0+n_1+n_2+n_3} + \cdots + 2^{n_0+n_1+n_2+n_3+\cdots+n_k},
\end{equation}
which we may express as 
\begin{equation}
\label{eq:n}
n= 2^{n_0}(1+2^{n_1}(1+2^{n_2}(1+2^{n_3}(\cdots 2^{n_{k-1}}(1+ 2^{n_k})\cdots))))
\end{equation}
where the intergers $n_i$ satisfy $n_0\ge 0$ and $n_i> 0$ for $i=1,\cdots,k$.

To unravel the value of $a_n$, we can use the expression for $n$ in Eq (\ref {eq:n}) and the recursion relations in Problem (\ref{thm:problem}).  Start from the inside and work out:
\begin{align*}
a_{1+ 2^{n_k}} &= \frac 1{a_{2^{n_k}}} = \frac 1{ 1+ n_k}\\
a_{1+2^{n_{k-1}}(1+ 2^{n_k}) } &= \frac 1{n_{k-1} +\frac 1{1+ n_k}}\\
&\vdots \\
a_{ 2^{n_0}(1+2^{n_1}(1+2^{n_2}(1+2^{n_3}(\cdots 2^{n_{k-1}}(1+ 2^{n_k})\cdots)))) } &=n_0+\frac{1}{n_1 + \frac{1}{n_2+ \frac 1{\ddots\frac{1}{n_{k-1}+
\frac{1}{1+ n_k }}}}}
\end{align*}
This final expression for $a_n$ as a long compound fraction
\begin{equation}
\label{eq:continued}
a_n=n_0+\frac{1}{n_1 + \frac{1}{n_2+ \frac 1{\ddots\frac{1}{n_{k-1}+
\frac{1}{1+ n_k }}}}}
\end{equation}
is called a continued fraction.

The $n_i$ can be obtained from counting the number of zeros between the base two digits of $n$ as can be seen in Eq (\ref{eq:base2}).  Therefore, from the base two representation of the index $n$, one can read off the sequence value $a_n$ by counting the number of zeros between ones or the decimal point.  The instructions are as follows:

\begin{instructions}
\label{thm:instructions} \
\begin{enumerate}
\item  \label {itm:1} $n_0$ is the number of zeros between the decimal point and the first one (with perhaps $n_0$ being 0).
\item   \label {itm:2} $n_m$-1 is the  number of zeros between the $m$-th one and the next one for $0<m< k.$
\item  $n_k$-2 is the  number of zeros between the $k-1$-st one and the last ($k$-th) one.
\end{enumerate}
\end{instructions}

\noindent
Note the last nested denominator of the partial fraction is $1+n_k$.  For example, if $600= 1001011000.0$, so 
\[ a_{600} = 3+\frac{1}{1 + \frac{1}{2+\frac{1}{4}}}.\]
Furthermore for any $n>600$ that has a base two representation agreeing with the start of 600, i.e., $\cdots 1011000$, $a_n$ has the same beginning as a continued fraction as $a_{600}$ (up through $n_{k-1}=2$),
\[a_{600} = 3+\frac{1}{1 + \frac{1}{2+\frac{1}{\ddots}}}.\]

If you combine the observation that $a_n$ has the form given in Eq (\ref {eq:continued}) with Problem (\ref {thm:problem}), then you see that each positive rational number has a unique representation as a continued fraction of the form given in  Eq (\ref {eq:continued}), i.e., with for some whole number $k$, $n_0\ge 0$, and $n_i> 0$ for $i=1,\cdots,k$.\footnote{In some presentations, $n_k$ is allowed to be zero and then there may be two representations}.  It follows as a simple exercise that

\begin{theorem}
\label{thm:rational}
For every rational number $q$ there are unique choices of $z$ an integer, $k$ a whole number, and if $k>0$,  natural numbers $n_i$ for $i=1,\cdots,k$
so that $q$ may be uniquely represented as 
\[ z+\frac{1}{n_1 + \frac{1}{n_2+ \frac 1{\ddots\frac{1}{n_{k-1}+
\frac{1}{1+n_k }}}}}. \]
\end{theorem}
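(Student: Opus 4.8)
The plan is to reduce the statement for an arbitrary rational $q$ to the fact, established just above the theorem, that every \emph{positive} rational number has a unique representation of the form in Eq (\ref{eq:continued}) with $n_0 \geq 0$ and $n_i > 0$ for $i=1,\dots,k$. The only genuinely new feature is that the leading term $z$ is now allowed to be any integer rather than a nonnegative one. The engine behind the reduction is the observation that, whenever $k\geq 1$, the continued-fraction ``tail'' always lands in the open interval $(0,1)$, so that $z$ is forced to be the integer part of $q$.

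First I would pin down the range of the tail $T=\frac{1}{n_1+\frac{1}{n_2+\cdots+\frac{1}{1+n_k}}}$ when $k\geq 1$, by an inside-out induction. The innermost denominator satisfies $1+n_k\geq 2$, so the innermost fraction lies in $(0,\tfrac12]$. At each successive level the denominator has the form $n_i+T'$ with $n_i\geq 1$ and $T'\in(0,1)$ by the inductive hypothesis, hence the denominator exceeds $1$ and its reciprocal again lies in $(0,1)$. Therefore $T\in(0,1)$ strictly. Consequently any representation with $k\geq 1$ gives $q=z+T$ with $T\in(0,1)$, so $q$ is not an integer and $z=\lfloor q\rfloor$ is forced, whereas $k=0$ gives exactly $q=z\in\mathbb{Z}$.

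This dichotomy settles both existence and uniqueness. For existence: if $q\in\mathbb{Z}$, take $z=q$ and $k=0$; if $q\notin\mathbb{Z}$, set $z=\lfloor q\rfloor$, so that $q-z$ is a positive rational in $(0,1)$, and apply the positive-rational result to $q-z$. Since $q-z<1$ and is not an integer, its representation cannot have $k=0$ (which would make it a nonnegative integer), so its leading term is $n_0=0$ and $q-z=T$ for some uniquely determined $n_1,\dots,n_k$; then $q=z+T$ is the desired representation. For uniqueness: the range argument shows that $z$ is determined by $q$ alone (it is $q$ when $q\in\mathbb{Z}$ and $\lfloor q\rfloor$ otherwise), and once $z$ is fixed the tail must equal the positive rational $q-z$, whose $n_i$ are unique by the cited result.

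The only step requiring real care is the bounding of the tail to guarantee $T\in(0,1)$ strictly, since this is exactly what upgrades the nonnegative leading term of the earlier theorem to an arbitrary integer here; everything else is bookkeeping layered on top of the positive-rational statement. I expect the induction on the denominators to be the crux, and I would present it as the short, self-contained lemma that drives the whole argument.
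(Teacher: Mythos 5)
Your proposal is correct and follows essentially the same route as the paper, which derives the theorem as ``a simple exercise'' from the unique continued-fraction representation of positive rationals established just before it; your reduction via the tail lying strictly in $(0,1)$, forcing $z=\lfloor q\rfloor$ and $n_0=0$ for the fractional part, is exactly the intended filling-in of that exercise. No gaps.
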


Finally, note that if you start with a positive rational number $q$ and write down its continued fraction representation, then you have a recipe for finding the place of $q$ in the sequence, i.e., for finding the $n$ with $q=a_n$.  The recipe for expressing $q$ as a continued fraction follows from an algorithm that is thought to go back to the Pythagoreans and is called the Euclidean Algorithm as it appeared in Euclid's \textit{Elements}.

\section{Extension to positive real continued fractions and a sequence indexed by the 2-adic integers.}

We move beyond Problem (\ref {thm:problem}) in this section to an extension of the sequence.  We still write of this extension as a sequence even though it is more properly a function.

We have already seen and proved that there is a one-to-one correspondence between the rational numbers and the continued fractions.  These continued fractions had finitely many quotients terminating with $n_k+1$.  We will call these finite continued fractions.  The notion of continued fractions may be extended to infinite expressions:

\begin{equation}
\label{eq:infinitecontinuedfraction}
z+\frac{1}{n_1 + \frac{1}{n_2+ \frac 1{n_3 +\frac 1{n_4+ \frac 1{\ddots }}}}}.
\end{equation}
for $z\in\mathbb Z$ and $n_i\in\mathbb N$.

\begin{theorem}
\label{thm:irrational}
There is a one-to-one correspondence between the infinite continued fractions, i.e., expression of the form given in  (\ref{eq:infinitecontinuedfraction}), and the irrational numbers.  The continued fraction (\ref{eq:infinitecontinuedfraction}) is nonnegative if and only if $z\ge 0$.
\end{theorem}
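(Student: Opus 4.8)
The plan is to set up the classical machinery of convergents and then show that the map sending an infinite continued fraction to its limiting value is a well-defined bijection onto the irrationals, with the Euclidean algorithm recalled in the previous section providing the inverse. First I would make sense of the infinite expression (\ref{eq:infinitecontinuedfraction}) as a limit. For fixed $z\in\mathbb{Z}$ and $n_i\in\mathbb{N}$, let $c_k=p_k/q_k$ denote the $k$-th convergent, the finite continued fraction obtained by truncating after $n_k$, computed by the standard recurrences $p_k=n_kp_{k-1}+p_{k-2}$ and $q_k=n_kq_{k-1}+q_{k-2}$ with $p_{-1}=1,\ p_0=z,\ q_{-1}=0,\ q_0=1$. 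I would then record the determinant identity $p_kq_{k-1}-p_{k-1}q_k=(-1)^{k-1}$, which gives $c_k-c_{k-1}=(-1)^{k-1}/(q_kq_{k-1})$. Since each $n_i\ge1$, the positive denominators $q_k$ grow at least as fast as the Fibonacci numbers and so tend to infinity; hence the even convergents increase, the odd convergents decrease, the two families interleave, and consecutive convergents pull together. This shows the $c_k$ form a Cauchy sequence, so the limit exists and (\ref{eq:infinitecontinuedfraction}) denotes a well-defined real number $x$.

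Next I would prove that $x$ is irrational and locate it. Because $x$ always lies strictly between $c_{k-1}$ and $c_k$, the quantity $q_kx-p_k$ is nonzero for every $k$ yet satisfies $|q_kx-p_k|<1/q_{k+1}\to0$; were $x=a/b$ rational, then $|q_kx-p_k|$ would be a nonzero integer multiple of $1/b$ and hence bounded below, a contradiction. For the location, let $y_1$ denote the value of the tail $\cfrac{1}{n_1+\cfrac{1}{n_2+\cdots}}$; applying the same convergence argument to this tail gives $y_1\in(0,1)$, so $x=z+y_1$ forces $z=\lfloor x\rfloor$. In particular $x\ge0$ exactly when $z\ge0$, and since an irrational is never $0$ this is precisely the asserted sign statement.

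Finally I would establish the bijection by exhibiting mutually inverse procedures. For injectivity, the relation $x=z+y_1$ with $y_1\in(0,1)$ recovers $z$ as $\lfloor x\rfloor$; then $1/y_1=n_1+y_2$ with $y_2\in(0,1)$ recovers $n_1=\lfloor 1/y_1\rfloor$, and iterating recovers every $n_i$, so distinct continued fractions have distinct values. For surjectivity, I would run the Euclidean-type algorithm on an arbitrary irrational $x$: set $z=\lfloor x\rfloor$, $x_1=1/(x-z)$, and then $n_i=\lfloor x_i\rfloor$, $x_{i+1}=1/(x_i-n_i)$. Each $x_i$ is irrational, so no remainder is ever $0$ and the process never terminates, producing an expression of the required shape; its convergents bracket $x$ by the same interleaving estimates, so it converges back to $x$.

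I expect the convergence-and-trapping step to be the main obstacle: once the recurrence for $q_k$ and the determinant identity are in hand, the irrationality of the limit, the inversion of the algorithm, and the sign claim all follow quickly, so the technical weight sits in establishing that the convergents converge and that they bracket their limit.
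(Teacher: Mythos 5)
Your proposal is correct, but there is nothing in the paper to compare it against: the paper does not prove Theorem~\ref{thm:irrational} at all. Immediately after stating it, the author describes the convergents $c_m$, asserts that the odd convergents increase and the even ones decrease to an irrational limit, and then writes that ``the details are beyond this talk'' and points the reader to \cite{Olds}. What you have written is precisely the standard argument that such a reference would contain: the three-term recurrences for $p_k$ and $q_k$, the determinant identity $p_kq_{k-1}-p_{k-1}q_k=(-1)^{k-1}$ giving $c_k-c_{k-1}=(-1)^{k-1}/(q_kq_{k-1})$, the Fibonacci-type growth of $q_k$ forcing convergence, the irrationality of the limit via $0<|q_kx-p_k|<1/q_{k+1}$ against the lower bound $1/b$ for a putative rational $x=a/b$, and the continued fraction algorithm as the two-sided inverse establishing bijectivity. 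Your treatment of the sign claim (the tail $y_1$ lies in $(0,1)$, so $z=\lfloor x\rfloor$ and $x\ge 0$ iff $z\ge 0$) is also right, and is the one piece of the theorem the paper's citation would not explicitly cover. The only point I would tighten is the final surjectivity step: rather than saying the convergents of the algorithm's output ``bracket $x$ by the same interleaving estimates,'' it is cleaner to use the exact identity $x=(p_kx_{k+1}+p_{k-1})/(q_kx_{k+1}+q_{k-1})$, which gives $|x-c_k|=1/(q_k(q_kx_{k+1}+q_{k-1}))\to 0$ directly. As written, your argument is a complete and correct proof of a statement the paper leaves to the literature.
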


An infinite expression of the form (\ref{eq:infinitecontinuedfraction}) must be interpreted using the theory of limits just as infinite decimals must be interpreted using limits.  One may terminate the infinite continued fraction (\ref{eq:infinitecontinuedfraction}) at a finite level, say at $n_m$, to get the rational number $c_{m+1}$.  The $c_m$'s are called \textit {convergents} of the continued fraction.  The infinite continued fraction (\ref {eq:infinitecontinuedfraction}) is viewed as a limit of its sequence of convergents:
\begin{equation}
\label{eq:convergents}
z,
z+\frac{1}{n_1},\,
z+\frac{1}{n_1 + \frac{1}{n_2}},\,
z+\frac{1}{n_1 + \frac{1}{n_2+ \frac 1{n_3 }}},\,
z+\frac{1}{n_1 + \frac{1}{n_2+ \frac 1{n_3 +\frac 1{n_4}}}} \, \cdots.
\end{equation}
The limit $\lim_{m\to\infty} c_m$ always exists and is an irrational number, $r$.  The odd subsequence of convergents $\{c_{2n-1}\}_{n=1}^\infty$ monotonically increases to $r$, and the even subsequence of convergents $\{c_{2n}\}_{n=1}^\infty$ monotonically decreases to $r$.  The details are beyond this talk but reader may find them in many sources such as \cite {Olds}.

It is interesting to note that continued fraction representation of a number may be more easily accessed or understood than a decimal representation.  For example, while both $\sqrt 2$ and $e$ have infinite decimal representations that are not completely known, the continued fraction representations are known.  Square roots all have periodic continued fraction representations and

\[
\sqrt 2 = 1+\frac{1}{2 + \frac{1}{2+ \frac{1}{2+\frac{1}{2 +\frac{1}{ \quad \ddots}}}}}
\]
or $n_i =2$ for $i=1,2,3, \cdots$.  The representation of $e$ was first published in 1744 by Euler:

\[
e = 2+\frac{1}{1 + \frac{1}{2+ \frac{1}{1+\frac{1}{1 +\frac{1}{4+\frac{1}{1 +\frac{1}{\quad \ddots}}}}}}}
\]
or $n_0 =2$ and for $i>0$, $n_i=\left\{
\aligned
2k  &\quad\text {if $i=3k-1$}\\
1  &\quad\text {otherwise}\\
\endaligned\right.
$.
For a proof see \cite{Cohn}.

We now turn to the second aspect needed to extend the sequence.  The index set of the sequence used to obtain the rational numbers are the natural numbers.  Representing the index $n$ as a base two numeral allows us to directly produce continued fraction representation of $a_n$.  We extend the index set to a number system called the 2-adic integers to produce a type of 2-adic sequence that yields the positive real numbers without repetition. 

Infinite expressions like infinite continued fractions or infinite decimals are given meaning using the notion of limits.  For example the infinite decimal $0.333333333\cdots$ is given meaning as the limit of the sequence 0.3, 0.33, 0.333, 0.3333, etc.  The later terms are close together because their difference is small in the usual absolute value or norm.  The digits farther to the right of the decimal point are in smaller place values than those to the left in terms of the norm.

There are number systems introduced (in the late 1890's by Kurt Hensel) that extend the counting numbers using norms motivated by number theory.  For each prime number $p$, there is a number system called the $p$-adics.  These $p$-adic numbers turn out to be very important in mathematics.  For example, Wiles's proof of Fermat's last theorem uses p-adic numbers.  We consider only the 2-adics.  Every integer can be written as a power of two times an odd number, $n = 2^k m$ with $m$ odd.  The $2$-adic norm is $|n|_2 = |2^k m|_2 = 1/2^k$.  If we right counting numbers in base two, then the digits farther to the left are in the smaller place value in terms of the 2-adic norm.  The 2-adic integer numerals are base two numerals that extend infinitely to the left!  A 2-adic integer is of the form $\displaystyle z=\sum 2^{m_i}$ for $m_i$ an increasing finite or infinite sequence of counting numbers.  An example of is 
$\cdots 01010101010 = \sum_{m=1}^\infty 2^{2m-1} = -2/3$, which is a 2-adic integer.  (You can see it is -2/3 by dividing by 2, multiplying by 3, and adding 1 to get zero).

We can extend the index set of the sequence in Problem (\ref {thm:problem}) infinite 2-adic numerals by using part \ref {itm:1} and \ref {itm:2} of Instructions (\ref{thm:instructions}).  So for example, 
\[ a_{-2/3}=a_{\ \cdots 01010101010}=1+\frac{1}{2 + \frac{1}{2+ \frac{1}{2+\frac{1}{2 +\frac{1}{2+\frac{1}{2 +\frac{1}{\quad \ddots}}}}}}},\]
which happens to be $\sqrt 2$.  In general, following Instructions (\ref{thm:instructions}) means expressing a 2-adic integer as 
\begin{equation}
\displaystyle n=\sum_{m=0}^\infty 2^{\sum_{i=0}^{m} n_i}
\end{equation}
then the instructions yield 
\begin{equation}
a_{n} =n_0+\frac{1}{n_1 + \frac{1}{n_2+ \frac{1}{n_3 +\frac{1}{n_4 +\frac{1}{n_5 +\frac{1}{\ddots }}}}}}
\end{equation}

This sets up an one-to-one correspondence between the 2-adic integers and the positive real numbers by Theorems (\ref{thm:rational}) and (\ref{thm:irrational}).

We close by noting that although this map is not continuous, there is a consistency result.  Every 2-adic integer is a limit of counting numbers specified by the numeral representation since
\[
\lim_{m\to\infty} \sum_{i=0}^m 2^{m_i} = \sum_{i=0}^\infty  2^{m_i}.
\]

The consistency result is

\begin{theorem}
\label {thm:consistency}  Let $z=\sum_{k=0}^{\infty} 2^{\sum_{i=0}^{k} n_i}$ with $n_i\in\mathbb Z$, $n_0\ge 0$, and $n_i >0$ for $i>0$.  Then
\begin{equation} \label{eq:2-adic}  
\lim_{m\to\infty} a_{ \sum_{k=0}^{m} 2^{\sum_{i=0}^{k} n_i}} = n_0+\frac{1}{n_1 + \frac{1}{n_2+ \frac{1}{n_3 +\frac{1}{n_4 +\frac{1}{n_5 +\frac{1}{\ddots }}}}}} = a_{z}
\end{equation}
\end{theorem}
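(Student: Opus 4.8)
The plan is to treat each finite truncation $N_m = \sum_{k=0}^{m} 2^{\sum_{i=0}^{k} n_i}$ as an ordinary natural number, read off the value $a_{N_m}$ from its base-two gaps exactly as in Instructions (\ref{thm:instructions}), and then show that $a_{N_m}$ is trapped between two consecutive convergents of the limiting infinite continued fraction. Because the continued-fraction theory quoted after Eq (\ref{eq:convergents}) already tells us that the convergents $c_m$ converge to the value $a_z$ appearing on the right of (\ref{eq:2-adic}) (and that $a_z$ is the irrational limit, by Theorem (\ref{thm:irrational}), since the genuinely infinite sum defining $z$ forces infinitely many partial denominators), a squeeze argument will then give (\ref{eq:2-adic}).

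First I would apply Instructions (\ref{thm:instructions}) and Eq (\ref{eq:continued}) to $N_m$. Its base-two numeral has ones exactly in the places $\sum_{i=0}^{k} n_i$ for $0\le k\le m$, so its gap data is $n_0,n_1,\dots,n_m$ and, for $m\ge 1$,
\[
a_{N_m}=n_0+\frac{1}{n_1 + \frac{1}{n_2+ \frac 1{\ddots\frac{1}{n_{m-1}+\frac{1}{1+ n_m }}}}}.
\]
The sole difference between this and the convergent $c_{m+1}$ of the infinite continued fraction is that its last partial denominator is $1+n_m$ rather than $n_m$. The key step, which I expect to be the main obstacle, is to identify this value exactly. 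Introducing the usual convergent numerators and denominators by $h_{-1}=1$, $k_{-1}=0$, $h_0=n_0$, $k_0=1$, and $h_j=n_j h_{j-1}+h_{j-2}$, $k_j=n_j k_{j-1}+k_{j-2}$, so that $c_{j+1}=h_j/k_j$, I would substitute the final denominator $1+n_m$ into the standard formula for a continued fraction with a modified last term and simplify using the recurrence:
\[
a_{N_m} = \frac{(1+n_m)\,h_{m-1} + h_{m-2}}{(1+n_m)\,k_{m-1} + k_{m-2}} = \frac{h_{m-1} + h_m}{k_{m-1} + k_m}.
\]
Thus $a_{N_m}$ is precisely the mediant of the consecutive convergents $c_m=h_{m-1}/k_{m-1}$ and $c_{m+1}=h_m/k_m$. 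A one-line computation with the positive denominators $k_{m-1},k_m>0$ shows that a mediant lies strictly between the two fractions forming it, so $a_{N_m}$ lies between $c_m$ and $c_{m+1}$.

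Finally I would close the argument by a squeeze. The cited continued-fraction theory guarantees that $a_z$ also lies between every pair of consecutive convergents, since the odd-indexed convergents increase to $a_z$ while the even-indexed ones decrease to it, and moreover $|c_m-c_{m+1}|\to 0$. As both $a_{N_m}$ and $a_z$ belong to the closed interval with endpoints $c_m$ and $c_{m+1}$, we obtain $|a_{N_m}-a_z|\le |c_m-c_{m+1}|\to 0$, which is exactly the statement (\ref{eq:2-adic}). The only genuine computation is the recurrence manipulation exhibiting $a_{N_m}$ as the mediant; everything else is bookkeeping and the quoted convergence of the convergents.
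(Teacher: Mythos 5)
Your proposal is correct and follows essentially the same route as the paper: both arguments trap $a_{N_m}$ between two consecutive convergents of the infinite continued fraction, observe that these convergents are shared with $a_z$, and conclude by the squeeze using the monotone convergence of the odd and even convergents. The one thing you add is the explicit identification of $a_{N_m}$ as the mediant $\frac{h_{m-1}+h_m}{k_{m-1}+k_m}$ of $c_m$ and $c_{m+1}$, which makes precise the ``lies between consecutive convergents'' step that the paper simply quotes from the general theory.
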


\begin{proof}
To show Eq.(\ref {eq:2-adic}), we need to refer to the theory of continued fractions.  The continued fraction in Eq.(\ref {eq:2-adic}) has a real limit $r$.  The $k$-th convergent is obtained by retaining truncating the continued fraction after the $k$-th denominator, i.e., in Eq.(\ref {eq:2-adic}) dropping, or setting to zero,$n_i$ for $i>k$.  The even convergents form an increasing sequence with limit  $a_z$ and the odd convergents form a decreasing sequence with limit $a_z$.  

Now,
$a_{ \sum_{k=0}^{m} 2^{\sum_{i=0}^{k} n_i}}$ is between its $(m-1)$-st and $(m-2)$-nd convergent, but the $(m-1)$-st and $(m-2)$-nd convergents of $a_{ \sum_{k=0}^{m} 2^{\sum_{i=0}^{k} n_i}}$ and $a_z$ are the same.  Eq.(\ref {eq:2-adic}) follows since $a_{ \sum_{k=0}^{m} 2^{\sum_{i=0}^{k} n_i}}$ is sandwiched between a monotonically increasing and decreasing sequences converging to $a_z$.
\end{proof}

\begin{example}  Consider right-hand edge of Kepler's tree in Section (\ref {sec:kepler}).  This edge consists  of ratios of successive Fibonacci numbers.  In terms of the sequence from Problem (\ref{thm:problem}), the edge is
\[ a_3, a_7, a_{15}, \cdots, a_{2^n -1}, \cdots.\]
If we write the indices in base two, it is
\begin{equation}
\label{eq:convergentsrecip}
a_{11}, a_{111}, a_{1111}, \cdots, a_{\underbrace{1\cdots 1}_{\text{$n$ one's}}}, \cdots.
\end{equation}
By Theorem (\ref {thm:consistency}),
\[\lim_{n\to\infty} a_{2^n -1} =   \lim_{n\to\infty} a_{1\cdots 1} = a_{\cdots 111}  \]
and 
\begin{equation}
\label{eq:reciprocal}
a_{\cdots 111} =  \frac{1}{1 + \frac{1}{1+ \frac{1}{1+\frac{1}{1 +\frac{1}{ \quad \ddots}}}}}
\end{equation}
by the instructions.  This continued fraction is the reciprocal of the golden ratio or $\displaystyle \frac {\sqrt 5 -1}2$.  The sequence of the right-hand edge of the tree, Eq. (\ref{eq:convergentsrecip}), is the sequence of convergents for the continued fraction (\ref{eq:reciprocal}).

\end{example}

\vspace {0.2 mm}

\noindent
\textbf {Acknowledgements}:  The author is grateful to Andrew McDaniel for reviewing this manuscript.

\end{document}